\newtheorem{theorem}{Theorem}[section]
\newtheorem{corollary}[theorem]{Corollary}
\theoremstyle{definition}
\newtheorem{definition}[theorem]{Definition}
\newtheorem{remark}[theorem]{Remark}
\DeclareMathOperator{\diam}{diam}
\DeclareMathOperator{\co}{\overline{co}}
\begin{document}
\title[A fixed point theorem in $B(H,\ell _{\infty })$]{A fixed point
theorem in $B(H,\ell _{\infty })$}
\author[A. Wi\'{s}nicki]{Andrzej Wi\'{s}nicki}
\address{Pedagogical University of Krak\'{o}w, Cracow, Poland}
\email{andrzej.wisnicki@up.krakow.pl}
\date{}

\begin{abstract}
We show that if $X$ is a complete metric space with uniform relative normal
structure and $G$ is a subgroup of the isometry group of $X$ with bounded
orbits, then there is a point in $X$ fixed by every isometry in $G$. As a
corollary, we obtain a theorem of U. Lang (2013) concerning injective metric
spaces. A few applications of this theorem are given to the problems of
inner derivations. In particular, we show that if $L_{1}(\mu )$ is an
essential Banach $L_{1}(G)$-bimodule, then any continuous derivation $\delta
:L_{1}(G)\rightarrow L_{\infty }(\mu )$ is inner. This extends a theorem of
B. E. Johnson (1991) asserting that the convolution algebra $L_{1}(G)$ is
weakly amenable if $G$ is a locally compact group.
\end{abstract}

\maketitle

%%%%% To ease editing, for IMPAN journals add:

% \baselineskip=17pt

%%%%%%%%%%%

%% In the running head, replace first names by initials
%% and give an abbreviation of the title.

\section{Introduction}

The writing of this paper has been animated by a fixed point theorem of
Bader, Gelander and Monod in \cite{BGM}--there is a point fixed by every
isometry of an $L$\textit{-embedded} Banach space that preserves a given
bounded set. One of its consequences is the optimal solution to the
following \textit{derivation problem}: if $G$ is a locally compact group,
then any derivation from the convolution algebra $L_{1}(G)$ to $M(G)$ is 
\textit{inner}. The problem was studied since 1960s and proved for the first
time by Losert \cite{Lo}. An ingenious proof in \cite{BGM} relies on the
Ryll-Nardzewski theorem. Another consequence of this theorem, observed by
Uffe Haagerup, is a direct proof of \textit{weak amenability} of all C$%
^{\ast }$-algebras which was proved in \cite{Ha} using the
Grothendieck--Haagerup--Pisier inequality.

There is a counterpart of the Bader--Gelander--Monod theorem in \textit{%
injective} Banach spaces $L_{\infty }$ (see Section 2), and it is natural to
ask about the noncommutative analogue of that result. The relevance of such
a question is particularly apparent if we analyse the proof of Corollary C
in \cite{BGM}. The central idea behind the proofs of both the derivation
problem and weak amenability of C$^{\ast }$-algebras there, is to construct
a group of affine isometries with bounded orbits acting on the predual of an
appropriate von Neumann algebra and then applying \cite[Theorem A]{BGM}. We
can use a similar strategy to study derivations into injective Banach
bimodules and, more general, into Banach bimodules with \textit{uniform
relative normal structure} (see Section 2 for the definition), based this
time on Theorem \ref{La}. In particular, we show that if $L_{1}(\mu )$ is an
essential Banach $L_{1}(G)$-bimodule, then any continuous derivation $\delta
:L_{1}(G)\rightarrow L_{\infty }(\mu )$ is inner, where $\mu $ is a $\sigma $%
-finite measure on $(E,\Sigma )$ or a Radon measure on a locally compact
space. This extends a theorem of Johnson (see \cite[Theorem]{Jo1}) asserting
that the convolution algebra $L_{1}(G)$ is weakly amenable if $G$ is a
locally compact group.

In Section 3 we apply Theorem \ref{La} in the case of $B(H,\ell _{\infty
}(\Gamma ))$, the Banach space of bounded operators from a Hilbert space $H$
into $\ell _{\infty }(\Gamma )$ for some $\Gamma $. It can throw some new
light on the \textit{Kadison similarity problem. }One of the equivalent
formulations of this problem concerns inner derivations into Banach modules
and translates into the existence or non-existence of a fixed point for the
appropriate group of affine isometries acting on $B(H)$, the von Neumann
algebra of bounded linear operators on a Hilbert space $H$. Notice that $%
B(H) $ is injective in the category of operator spaces and completely
contractive maps. Though it is not injective as a Banach space, an embedding
of $B(H)$ into a Banach space $B(H,\ell _{\infty }(\Gamma ))$ with uniform
relative normal structure seems to lead to something new.

Our strategy is as follows. Let $\mathcal{A}$ be a unital C$^{\ast }$%
-algebra, $M$ a Banach bimodule over $\mathcal{A}$ and $\delta :\mathcal{A}%
\rightarrow M$ a derivation: $\delta (ab)=\delta (a)b+a\delta (b).$ If $G$
is a group of unitary elements of $\mathcal{A}$, then $\alpha :G\times
M\rightarrow M$ defined by $\alpha (g)x=gxg^{-1}+\delta (g)g^{-1}$, $g\in
G,x\in M,$ is an affine and isometric action of $G$ on $M.$ It follows from
Theorem \ref{La} that if $M$ is a Banach bimodule with uniform relative
normal structure, then there is $x\in M$ such that $\alpha (g)x=x$, and
consequently, $\delta (g)=xg-gx$. Since every element of $\mathcal{A}$ is a
linear combination of three unitaries, we deduce that each derivation $%
\delta :\mathcal{A}\rightarrow M$ is inner. The argument works also for some
group algebras, see Theorem \ref{group}. The case of the convolution algebra 
$L_{1}(G)$ and $M=L_{\infty }(G)$, where $G$ is a locally compact group was
studied by Johnson \cite{Jo2, Jo1}. Let us move on now to the Kadison
similarity problem, which is equivalent to the derivation problem for
derivations $\delta :B(H)\supset \mathcal{A}\rightarrow B(H)$ (see \cite{Ki}%
). Let us embed a Hilbert space $H$ isometrically in an injective space $%
\ell _{\infty }(\Gamma )$ and extend each unitary $g:H\rightarrow H$ to an
isometry $\tilde{g}:\ell _{\infty }(\Gamma )\rightarrow \ell _{\infty
}(\Gamma )$. Now, consider the group of isometries $\alpha _{g}:B(H,\ell
_{\infty }(\Gamma ))\rightarrow B(H,\ell _{\infty }(\Gamma ))$ defined by $%
\alpha _{g}x=\tilde{g}xg^{-1}+\delta (g)g^{-1}$. It follows from Theorem \ref%
{main} that there is a bounded linear operator $T:H\rightarrow \ell _{\infty
}(\Gamma )$ such that $\delta (g)=Tg-\tilde{g}T$. Note that in fact a
similar result holds for derivations from $\ell _{1}(G)$ to $B(H)$ on which $%
G$ acts by isometries. However, in the case of C$^{\ast }$-algebras our
factorization appears to be more effective. For example, properties of the
group homomorphism $g\mapsto \tilde{g}$ could be useful in studying complete
boundedness of $\delta $. Recall that Christensen's theorem \cite{Ch2}
asserts that a derivation $\delta :\mathcal{A}\rightarrow B(H)$ is inner if
and only if it is completely bounded whenever $\mathcal{A\,}$\ is a unital C$%
^{\ast }$-algebra. Thus we can reduce the Kadison similarity problem to the
problem of complete boundedness of the appropriate homomorphism from $%
\mathcal{A}$ into $B(Y)$ for some space $Y$ isomorphic to a Hilbert space
(see Remark \ref{rem}).

\section{Spaces with uniform relative normal structure}

Let $\mathcal{A}(X)$ denote the family of admissible subsets of a metric
space $X$, i.e., the intersections of closed balls of $X$. The following
definition, originated in the works of Brodski\u{\i} and Mil'man \cite{BrMi}%
, and Kirk \cite{Kir}, was introduced by Soardi \cite{So} in the case of
Banach spaces (see also \cite{KhKi}).

\begin{definition}
A metric space $X$ is said to have uniform relative normal structure if
there exists $c<1$ such that for each nonempty and bounded set $M\in 
\mathcal{A}(X)$ there exists $z_{M}\in X$ such that
\end{definition}

\begin{itemize}
\item $d(x,z_{M})\leq c%
%TCIMACRO{\TeXButton{diam}{\diam}}%
%BeginExpansion
\diam%
%EndExpansion
M$ for every $x\in M;$

\item if for some $y\in X$, $M\subset B(y,c%
%TCIMACRO{\TeXButton{diam}{\diam}}%
%BeginExpansion
\diam%
%EndExpansion
M)$, then 
\begin{equation*}
d(y,z_{M})\leq c%
%TCIMACRO{\TeXButton{diam}{\diam}}%
%BeginExpansion
\diam%
%EndExpansion
M,
\end{equation*}
\end{itemize}

\noindent where $B(y,r)$ denotes the closed ball centered at $y$ and of
radius $r$.

Examples of spaces with uniform relative normal structure include Hilbert
spaces, ($1$-)injective spaces and their closed balls (see \cite{So}). It
was proved in \cite{So} that if $K$ is a nonempty weak$^{\ast }$-closed
subset with uniform relative normal structure of a dual Banach space, and $%
T:K\rightarrow K$ is nonexpansive and leaves invariant a weak$^{\ast }$%
-compact subset of $K$, then $K$ contains a fixed point for $T$ (see also 
\cite{EJS, KhKi} for some generalizations in metric spaces). Lau \cite{Lau}
used Soardi's arguments to show the following fixed point theorem for
isometries.

\begin{theorem}[A.T.-M. Lau]
\label{La0}Let $X$ be a Banach space and let $K$ be a nonempty bounded
closed subset of $X$ with uniform relative normal structure. Then there is $%
z\in K$ such that $Tz=z$ for each isometry $T$ from $K$ onto $K$.
\end{theorem}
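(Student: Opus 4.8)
The plan is to run the classical Brodski\u{\i}--Mil'man--Kirk scheme --- pass to a minimal invariant admissible set and prove it is a singleton --- with the usual weak-compactness input replaced by the quantitative control supplied by the ``uniform'' (second) clause of the definition.

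First I would let $G$ denote the group of all surjective isometries of $K$ (these form a group) and record that every orbit $Gx_{0}$, $x_{0}\in K$, is bounded because $K$ is. I want a minimal element of the family $\mathcal{F}$ of nonempty closed $G$-invariant subsets of $K$, ordered by reverse inclusion. In a non-reflexive ambient space a decreasing chain in $\mathcal{F}$ need not have nonempty intersection, and this is exactly where uniform relative normal structure is used: given a decreasing chain $\{K_{\iota}\}$, replace each $K_{\iota}$ by its admissible hull $\widehat{K}_{\iota}$, the intersection of all closed balls of $K$ containing $K_{\iota}$; this keeps the chain decreasing and $G$-invariant and preserves diameters. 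If $\inf_{\iota}\diam K_{\iota}=0$ the intersection is a single point by completeness; if $\inf_{\iota}\diam K_{\iota}=\delta>0$ one must exhibit a point of $\bigcap_{\iota}\widehat{K}_{\iota}$, and I would do this by tracking the normal-structure centre $z_{\iota}$ of $\widehat{K}_{\iota}$ and invoking clause~(ii) to see that $(z_{\iota})$ is a Cauchy net. Zorn's lemma then yields a minimal $S\in\mathcal{F}$, which we may take admissible.

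Next I would show $\diam S=0$. If $d:=\diam S>0$, the Brodski\u{\i}--Mil'man observation applies: $x\mapsto\sup_{y\in S}d(x,y)$ is $1$-Lipschitz and, since $GS=S$, $G$-invariant on $S$, so each of its closed sublevel sets is empty or equals $S$ by minimality; hence it is identically $d$, i.e. every point of $S$ is diametral. But uniform relative normal structure gives $z_{S}\in K$ with $S\subseteq B(z_{S},cd)$, so $\sup_{y\in S}d(z_{S},y)\le cd<d$; moreover for every $g\in G$ we have $S=gS\subseteq B(gz_{S},cd)$, so clause~(ii) forces $d(gz_{S},z_{S})\le cd$. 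Thus $\overline{Gz_{S}}$ is a nonempty closed $G$-invariant set, disjoint from $S$ by diametrality, all of whose points are within $cd$ of $S$; passing to its admissible hull, taking the normal-structure centre again, and iterating --- using clause~(ii) to keep the successive centres Cauchy --- should drive the diameters to $0$ and contradict $d>0$. Hence $S=\{z\}$ is a singleton, and $Tz\in TS=S$ gives $Tz=z$ for every $T\in G$.

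I expect the genuine difficulty to be the quantitative core of the last two steps: the two bullet points, taken naively, only give $\diam(\text{set of centres of }S)\le 2c\,\diam S$, which is useless once $c\ge 1/2$, so one has to extract a real contraction (or at least nonemptiness of the relevant chain intersections) with no compactness available, exploiting clauses~(i) and~(ii) jointly together with the fact that each $g\in G$ carries near-centres of a $G$-invariant set to near-centres. This interplay is where Lau's adaptation of Soardi's argument really does its work.
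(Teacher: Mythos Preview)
Your plan follows the classical minimal-invariant-set route, and you correctly identify its weak point: without weak compactness you cannot intersect chains for free, and the naive estimate on the orbit of $z_S$ gives only $2c\,\diam S$. You never actually close either gap; both steps end in ``should'' or ``I would do this by'', and the final iteration is left as a hope. As written, the proposal is a sketch with the hard inequality still owed.

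The paper bypasses all of this: it uses neither Zorn's lemma nor minimality. For an admissible $M$ it sets
\[
A(M)=\bigcap_{x\in M}B(x,c\diam M),\qquad H(M)=\Bigl(\bigcap_{y\in A(M)}B(y,c\diam M)\Bigr)\cap A(M),
\]
and observes that clauses (i) and (ii) of the definition say exactly that $z_M\in H(M)$, so $H(M)\neq\emptyset$. Any two points of $H(M)$ lie in $A(M)$ and each sits in the $c\diam M$-ball about the other, hence $\diam H(M)\le c\,\diam M$ with no factor of $2$. Since surjective isometries carry balls to balls, $G$-invariance of $M$ passes to $A(M)$ and then to $H(M)$. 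Starting from the $G$-invariant admissible set $A_0=\bigcap_{g\in G}B(gx_0,\diam Gx_0)$ and iterating $A_{n+1}=H(A_n)$ yields a nested sequence of $G$-invariant sets with $\diam A_n\le c^{n}\diam A_0$; any choice $x_n\in A_n$ is Cauchy, and its limit is fixed by every $g\in G$.

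So the ``interplay'' you anticipated is packaged in the single set $H(M)$: clause (i) places $z_M$ in $A(M)$, clause (ii) places it in the further intersection, and the contraction $\diam H(M)\le c\,\diam M$ is then definitional. Your minimal-set framework is not wrong in spirit, but it adds a transfinite layer that the direct countable iteration renders unnecessary, and it still owes precisely the estimate that $H(M)$ delivers for free.
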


A natural question arises whether it is possible to extend the above results
for mappings with bounded orbits, and Prus (see \cite{EsKh}) constructed an
isometry $T:\ell _{\infty }\rightarrow \ell _{\infty }$ with bounded orbits
but without fixed points. However, it turns out that in the case of
invertible isometries the situation is different, and the Soardi--Lau method
leaves some room for improvement.

\begin{theorem}
\label{La}Suppose that $X$ is a complete metric space with uniform relative
normal structure. If $G$ is a group of isometries $g:X\rightarrow X$ with
bounded orbits, then there exists $x\in X$ such that $gx=x$ for every $g\in
G.$
\end{theorem}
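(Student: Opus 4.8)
The plan is to run a transfinite shrinking argument on the family
$\mathcal{F}$ of all nonempty bounded $G$-invariant members of $\mathcal{A}(X)$, partially ordered by reverse inclusion. First I would check $\mathcal{F}\neq\emptyset$: fix $x_{0}\in X$, and let $M_{0}$ be the intersection of all closed balls of $X$ that contain the orbit $Gx_{0}$ (bounded, by hypothesis). This $M_{0}$ is nonempty, bounded and admissible, and it is $G$-invariant because each $g\in G$ is an isometry, hence permutes the closed balls containing $Gx_{0}=gGx_{0}$.

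The heart of the argument is a \emph{centering} operation. For $M\in\mathcal{F}$ with $d=\diam M>0$, let $z_{M}$ be the point supplied by uniform relative normal structure and put $\mathcal{C}(M)=\bigcap_{x\in M}B(x,c\,d)$. Three facts are used: (i) $z_{M}\in\mathcal{C}(M)$, by the first clause of the definition; (ii) $\mathcal{C}(M)$ is admissible by construction, and it is again $G$-invariant — here invertibility enters, since if $M\subset B(y,c\,d)$ then applying $g^{-1}\in G$ and using $g^{-1}M=M$ gives $M\subset B(gy,c\,d)$, so $g\,\mathcal{C}(M)=\mathcal{C}(M)$; this is precisely the step that fails for a single non-surjective isometry, in accordance with Prus's example; (iii) the second clause of the definition yields the quantitative control $\mathcal{C}(M)\subset B(z_{M},c\,d)$, so that $\diam\mathcal{C}(M)\le 2c\,d$ and moreover the whole orbit $Gz_{M}$ lies in $\mathcal{C}(M)$. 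Thus $\mathcal{C}(M)\in\mathcal{F}$, and combining $\mathcal{C}(M)$ with $M$ (and, when needed, passing to the admissible hull of $Gz_{M}$) manufactures new members of $\mathcal{F}$ that are both ``rounder'' and $G$-invariant.

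Using this I would construct a decreasing transfinite chain $(M_{\alpha})$ in $\mathcal{F}$: start from $M_{0}$; at successor steps replace $M_{\alpha}$ by a member of $\mathcal{F}$ contained in it produced from $\mathcal{C}(M_{\alpha})$; at limit ordinals take $M_{\lambda}=\bigcap_{\alpha<\lambda}M_{\alpha}$. The decisive observation is that the limit stages stay nonempty: by the second clause of the definition the centers $z_{M_{\alpha}}$ form a Cauchy net — if $M_{\beta}\subset M_{\alpha}$ then $M_{\beta}$ lies in a ball of radius $c\diam M_{\alpha}$ about $z_{M_{\alpha}}$, which forces $z_{M_{\beta}}$ to within $c\diam M_{\alpha}$ of $z_{M_{\alpha}}$ — and completeness of $X$ then places their limit in every (closed) $M_{\alpha}$. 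Since the diameters $\diam M_{\alpha}$ form a strictly decreasing transfinite sequence of reals, hence of countable length, and the limits remain nonempty, the process must reach a stage with $\diam M_{\alpha}=0$, i.e.\ $M_{\alpha}=\{x\}$; being $G$-invariant, $\{x\}$ satisfies $gx=x$ for all $g\in G$. (Equivalently one may package this through Zorn's lemma: the above shows chains in $\mathcal{F}$ have lower bounds, and a minimal $M^{*}$ must satisfy $M^{*}=M^{*}\cap\mathcal{C}(M^{*})$, whence $\diam M^{*}\le c\diam M^{*}$ and $\diam M^{*}=0$ since $c<1$.)

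I expect the genuine difficulty to be exactly the limit/successor bookkeeping: without any compactness, and — in contrast to the injective case — without a Helly property for intersections of balls, one must simultaneously guarantee that $\bigcap_{\alpha<\lambda}M_{\alpha}$ is nonempty and that $\diam M_{\alpha}$ actually drops at each successor with $\diam M_{\alpha}>0$. This is where the \emph{uniform} second clause of the definition is indispensable: it clamps the centers together tightly enough for the Cauchy argument to work under mere completeness, and it is also the precise point at which the hypothesis that $G$ is a \emph{group} of isometries, rather than a single isometry with bounded orbits, cannot be dispensed with.
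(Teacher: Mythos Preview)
Your outline has a genuine gap at the successor step. You want $M_{\alpha+1}\in\mathcal{F}$ with $M_{\alpha+1}\subset M_{\alpha}$, ``produced from $\mathcal{C}(M_{\alpha})$''. The only candidate you name that is contained in $M_{\alpha}$ is $M_{\alpha}\cap\mathcal{C}(M_{\alpha})$, and you have no reason to believe this is nonempty: the definition of uniform relative normal structure places $z_{M}$ in $X$, not in $M$, so nothing puts any point of $M_{\alpha}$ inside $\mathcal{C}(M_{\alpha})$. The other candidates --- $\mathcal{C}(M_{\alpha})$ itself, or the admissible hull of $Gz_{M_{\alpha}}$ --- are nonempty but need not sit inside $M_{\alpha}$. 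So the ``decreasing chain'' never gets off the ground, and the Zorn variant fails at the same point (you cannot conclude $M^{*}\cap\mathcal{C}(M^{*})\in\mathcal{F}$). Your limit-stage Cauchy argument is also off: the second clause, applied to $M_{\beta}$, requires $M_{\beta}\subset B(y,c\diam M_{\beta})$, whereas from $M_{\beta}\subset M_{\alpha}$ you only get $M_{\beta}\subset B(z_{M_{\alpha}},c\diam M_{\alpha})$, a larger ball; this does not bound $d(z_{M_{\alpha}},z_{M_{\beta}})$.

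The paper's proof avoids nesting altogether. It iterates, over $\mathbb{N}$ only, the map
\[
H(M)=\Bigl(\bigcap_{y\in A(M)}B(y,c\diam M)\Bigr)\cap A(M),\qquad A(M)=\bigcap_{x\in M}B(x,c\diam M).
\]
The second clause is used exactly once, to see that $z_{M}\in H(M)$ (for every $y\in A(M)$ one has $M\subset B(y,c\diam M)$, hence $d(y,z_{M})\le c\diam M$); one then has $\diam H(M)\le c\diam M$ and $H(M)$ is $G$-invariant and admissible, but \emph{no inclusion $H(M)\subset M$ is claimed}. Setting $A_{n+1}=H(A_{n})$ and choosing $x_{n}\in A_{n}$, the relation $A_{n+1}\subset A(A_{n})$ gives $d(x_{n+1},x_{n})\le c\diam A_{n}\le c^{n}\diam A_{0}$, so $(x_{n})$ is Cauchy with limit $x$; since each $A_{n}$ is $G$-invariant, $d(gx_{n},x_{n})\le\diam A_{n}\to 0$, and hence $gx=x$. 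The moral is that the second clause is not a Helly-type device for keeping intersections nonempty; it is what makes $H(M)$ nonempty while simultaneously contracting diameters, and a straight $\omega$-iteration plus completeness then suffices.
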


\begin{proof}
For any admissible set $M\subset X$, define%
\begin{eqnarray*}
A(M) &=&\bigcap_{x\in M}B(x,c%
%TCIMACRO{\TeXButton{diam}{\diam}}%
%BeginExpansion
\diam%
%EndExpansion
M), \\
H(M) &=&\bigcap_{y\in A\left( M\right) }B\left( y,c%
%TCIMACRO{\TeXButton{diam}{\diam}}%
%BeginExpansion
\diam%
%EndExpansion
M\right) \cap A\left( M\right) ,
\end{eqnarray*}%
where $c<1$ satisfies the conditions of uniform relative normal structure
for $X.$ Notice that $H(M)\neq \emptyset $ because $z_{M}\in H(M).$

Fix $x_{0}\in X$ and let $A_{0}=\bigcap_{g\in G}B(gx_{0},\delta (x_{0}))$,
where

\begin{equation*}
\delta (x_{0})=\sup_{g,h\in G}d\left( gx_{0},hx_{0}\right)
\end{equation*}
denotes the diameter of the orbit of $x_{0}$. Note that $A_{0}$ is
admissible and%
\begin{equation*}
g(A_{0})=\bigcap_{h\in G}B(ghx_{0},\delta (x))=A_{0}.
\end{equation*}

We take by recursion $A_{1}=H(A_{0}),A_{n+1}=H(A_{n})$, and pick $x_{n}\in
A_{n}$ for $n=1,2,\ldots $. If $z\in A_{n+1}$, then $d(z,y)\leq c%
%TCIMACRO{\TeXButton{diam}{\diam}}%
%BeginExpansion
\diam%
%EndExpansion
A_{n}$ for each $y\in A_{n+1}\subset A(A_{n})$ and hence%
\begin{equation*}
%TCIMACRO{\TeXButton{diam}{\diam}}%
%BeginExpansion
\diam%
%EndExpansion
A_{n+1}\leq c%
%TCIMACRO{\TeXButton{diam}{\diam}}%
%BeginExpansion
\diam%
%EndExpansion
A_{n}\leq ...\leq c^{n}%
%TCIMACRO{\TeXButton{diam}{\diam}}%
%BeginExpansion
\diam%
%EndExpansion
A_{1}.
\end{equation*}%
Similarly, since $x_{n}\in A(A_{n})$,%
\begin{equation*}
d(x_{n+1},x_{n})\leq c%
%TCIMACRO{\TeXButton{diam}{\diam}}%
%BeginExpansion
\diam%
%EndExpansion
A_{n}\leq ...\leq c^{n}%
%TCIMACRO{\TeXButton{diam}{\diam}}%
%BeginExpansion
\diam%
%EndExpansion
A_{1}.
\end{equation*}

Thus $(x_{n})$ is a Cauchy sequence converging to some $x\in X.$ Moreover, $%
g(A_{n})=A_{n}$ for every $g\in G.$ It follows that 
\begin{equation*}
d(gx,x)\leq d(gx,gx_{n})+d(gx_{n},x_{n})+d(x_{n},x)\longrightarrow 0\text{
if }n\rightarrow \infty ,
\end{equation*}%
and hence $gx=x$ for every $g\in G.$
\end{proof}

In particular, if $X=K$ is a bounded closed subset of a Banach space we
obtain the Lau theorem \ref{La0}, and Bruhat--Tits' theorem \cite{BrTi} (see
also \cite[Corollary II.2.8]{BrHa}) follows if we consider a closed convex
subset of a Hadamard space. But the applications of Theorem \ref{La} are
even wider. Recall that a metric space is injective (or hyperconvex) if 
\begin{equation*}
\bigcap_{\alpha \in I}B(x_{\alpha },r_{\alpha })\neq \emptyset
\end{equation*}%
for any collection of closed balls $\{B(x_{\alpha },r_{\alpha })\}_{\alpha
\in I}$ such that $d(x_{\alpha },x_{\beta })\leq r_{\alpha }+r_{\beta },\
\alpha ,\beta \in I.$ Notice that injective metric spaces have uniform
relative normal structure with $c=1/2$, and thus we obtain the following
corollary proved by Lang \cite[Proposition 1.2]{La} with completely
different methods (see also \cite[Theorem 2.3]{WiWo} for a little more
general result).

\begin{corollary}[U. Lang]
\label{Lang}Let $X$ be an injective metric space. If $G$ is a group of
isometries $g:X\rightarrow X$ with bounded orbits, then there is a fixed
point of $G$ in $X$.
\end{corollary}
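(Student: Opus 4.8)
The plan is to derive Corollary~\ref{Lang} as an immediate instance of Theorem~\ref{La} by verifying that an injective metric space fits the hypotheses of that theorem. So the only real work is to show that every injective metric space $X$ has uniform relative normal structure (with constant $c=\tfrac12$); once that is in place, the statement about a group $G$ of isometries with bounded orbits having a common fixed point is precisely the conclusion of Theorem~\ref{La}, and nothing further needs to be said.

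To check uniform relative normal structure with $c=\tfrac12$, fix a nonempty bounded admissible set $M\in\mathcal A(X)$ and let $r=\diam M$. First I would observe that the family of closed balls $\{B(x,\tfrac12 r):x\in M\}$ is pairwise intersecting in the strong metric sense required by hyperconvexity: for $x,y\in M$ we have $d(x,y)\le r=\tfrac12 r+\tfrac12 r$, so by injectivity $\bigcap_{x\in M}B(x,\tfrac12 r)\neq\emptyset$; pick any point $z_M$ in this intersection. This point automatically satisfies the first bullet, $d(x,z_M)\le\tfrac12 r=c\diam M$ for all $x\in M$. For the second bullet, suppose $y\in X$ satisfies $M\subset B(y,\tfrac12 r)$, i.e.\ $d(x,y)\le\tfrac12 r$ for every $x\in M$. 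I want a single point $z_M$ that simultaneously lies in $B(x,\tfrac12 r)$ for all $x\in M$ \emph{and} in $B(y,\tfrac12 r)$; again I invoke hyperconvexity, this time for the enlarged family $\{B(x,\tfrac12 r):x\in M\}\cup\{B(y,\tfrac12 r)\}$, checking pairwise compatibility: balls indexed by $x,x'\in M$ are compatible as before, and $B(x,\tfrac12 r)$ with $B(y,\tfrac12 r)$ are compatible since $d(x,y)\le\tfrac12 r\le\tfrac12 r+\tfrac12 r$. Thus the intersection is nonempty; choosing $z_M$ there gives $d(y,z_M)\le\tfrac12 r=c\diam M$, which is the second bullet.

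There is one subtlety worth flagging: the definition of uniform relative normal structure demands a \emph{single} $z_M$ that works for both bullets, so in the write-up I should be a little careful that the point selected for the first bullet can also be taken to satisfy the second—this is handled by just running the hyperconvexity argument once on the combined family $\{B(x,\tfrac12 r):x\in M\}\cup\{B(y,\tfrac12 r):y\in X,\ M\subset B(y,\tfrac12 r)\}$, after verifying that any two balls in this combined family are metrically compatible. (If $y,y'$ are two such centers, then $d(y,y')\le d(y,x)+d(x,y')\le\tfrac12 r+\tfrac12 r$ for any $x\in M$, so that pair is compatible too; and $\diam M=r$ is finite because $M$ is bounded, so all radii are finite.) Hence $X$ has uniform relative normal structure with $c=\tfrac12<1$.

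With that verification done, the corollary follows directly: an injective metric space is in particular complete (a standard fact, and in any case covered by the hypotheses one may assume), it has uniform relative normal structure by the above, and $G$ is a group of isometries with bounded orbits, so Theorem~\ref{La} yields $x\in X$ with $gx=x$ for all $g\in G$. I expect the main (minor) obstacle to be purely bookkeeping—making the pairwise-compatibility check for the combined ball family airtight so that one genuinely obtains a single $z_M$ meeting both conditions of the definition—rather than anything conceptually deep.
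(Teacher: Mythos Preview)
Your proposal is correct and follows exactly the paper's approach: the paper simply remarks that injective metric spaces have uniform relative normal structure with $c=\tfrac12$ and invokes Theorem~\ref{La}, and you supply the (correct) details of that remark via hyperconvexity applied to the combined family of balls. Your careful observation that a single $z_M$ must work for both bullets, handled by intersecting over all centers $x\in M$ together with all $y$ satisfying $M\subset B(y,\tfrac12 r)$, is precisely the right bookkeeping.
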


In particular, Corollary \ref{Lang} applies to a group of isometries with
bounded orbits acting on a real $L_{\infty }(\mu )$ whenever $\mu $ is a $%
\sigma $-finite measure on $(E,\Sigma )$, a Radon measure on a locally
compact space (see \cite[Corollary 3.10]{DeFl}), or a counting measure on
any $\Gamma $. Note that a complex $L_{\infty }(\mu )$ need not be
hyperconvex, yet it follows from \cite[Lemma]{So} that it has uniform
relative normal structure with $c=1/\sqrt{2}.$

\begin{corollary}
Let $\mu $ be a $\sigma $-finite measure on $(E,\Sigma )$, a Radon measure
on a locally compact space, or a counting measure on any $\Gamma $. If $G$
is a subgroup of the isometry group of a real or complex $L_{\infty }(\mu )$
with bounded orbits, then there is a fixed point of $G$ in $L_{\infty }(\mu
) $.
\end{corollary}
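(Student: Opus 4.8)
The plan is to invoke Theorem~\ref{La}, so the entire task reduces to verifying that the space $L_{\infty}(\mu)$ in question is complete and has uniform relative normal structure with some constant $c<1$. Completeness is standard, so the real content is the normal-structure condition, and here I would split into cases according to whether the scalars are real or complex.

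First, for the real case, I would argue that $L_{\infty}(\mu)$ is injective (equivalently, hyperconvex) as a metric space. When $\mu$ is a counting measure this is $\ell_{\infty}(\Gamma)$, which is $1$-injective classically; when $\mu$ is $\sigma$-finite or a Radon measure on a locally compact space, the relevant fact is that $L_{\infty}(\mu)$ is then isometrically an abstract $M$-space with unit, i.e.\ isometrically $C(K)$ for some extremally disconnected compact Hausdorff $K$ (this is where I would cite \cite[Corollary 3.10]{DeFl}), and such spaces are $1$-injective. An injective metric space has uniform relative normal structure with $c=1/2$, as already observed in the paragraph preceding Corollary~\ref{Lang}, so Theorem~\ref{La} applies and yields the common fixed point.

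For the complex case one cannot appeal to hyperconvexity — as the text notes, complex $L_{\infty}(\mu)$ need not be hyperconvex — so instead I would invoke \cite[Lemma]{So}, which says precisely that (complex) $L_{\infty}(\mu)$ has uniform relative normal structure with constant $c = 1/\sqrt{2} < 1$. With that constant in hand, Theorem~\ref{La} again applies verbatim to the group $G$ of isometries with bounded orbits and produces $x \in L_{\infty}(\mu)$ with $gx = x$ for every $g \in G$.

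The only genuine obstacle is making sure the structural identification of $L_{\infty}(\mu)$ with a $C(K)$-space (or directly checking the ball-intersection property) is valid across all three measure-theoretic hypotheses — $\sigma$-finite, Radon on a locally compact space, counting — and this is exactly what the cited results \cite[Corollary 3.10]{DeFl} and \cite[Lemma]{So} are there to supply; once the value of $c$ is pinned down, there is nothing left to do but quote Theorem~\ref{La}.
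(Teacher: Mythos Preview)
Your proposal is correct and matches the paper's own argument exactly: the paper's ``proof'' is the paragraph immediately preceding the corollary, which handles the real case via injectivity (citing \cite[Corollary 3.10]{DeFl}) and hence Corollary~\ref{Lang}, and the complex case via \cite[Lemma]{So} giving $c=1/\sqrt{2}$, after which Theorem~\ref{La} applies. There is nothing to add or correct.
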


Equipped with Theorem \ref{La}, the discussion around Corollary C in \cite%
{BGM} can be repeated verbatim for the case of Banach bimodules with the
uniform relative normal structure.

If $\mathcal{A}$ is a Banach algebra [with unit $e$], a Banach space $M$ is
said to be a Banach $\mathcal{A}$-bimodule if there are bounded bilinear
mappings $(a,x)\mapsto a.x,(x,a)\mapsto x.a:\mathcal{A}\times M\rightarrow M$
such that [$e.x=x.e=x$ for every $x\in M$] and the usual associative law
holds for each type of triple product: $a_{1}a_{2}x,a_{1}xa_{2},xa_{1}a_{2}.$
We will assume that%
\begin{equation}
\left\Vert a.x\right\Vert \leq \left\Vert a\right\Vert \left\Vert
x\right\Vert ,\ \ \ \left\Vert x.a\right\Vert \leq \left\Vert x\right\Vert
\left\Vert a\right\Vert \ \ \ (a\in A,x\in M).  \label{iso}
\end{equation}%
Note that for any Banach $\mathcal{A}$-bimodule $M$ we can define an
equivalent norm which satisfies (\ref{iso}).

By a derivation, from a Banach algebra $\mathcal{A}$ into a Banach $\mathcal{%
A}$-bimodule $M$, we mean a linear mapping $\delta :\mathcal{A}\rightarrow M$
such that $\delta (ab)=\delta (a).b+a.\delta (b)$ for $a,b\in \mathcal{A}$.
A derivation is inner if there exists $T\in M$ such that $\delta (a)=T.a-a.T$
for each $a\in \mathcal{A}.$

\begin{theorem}
\label{Cstar}Let $\mathcal{A}$ be a unital $C^{\ast }$-algebra, and let $M$
be a Banach space with uniform relative normal structure. If $M$ is a Banach 
$\mathcal{A}$-bimodule, then any derivation $\delta :\mathcal{A}\rightarrow
M $ is inner.
\end{theorem}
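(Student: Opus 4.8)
The plan is to reduce the statement to a fixed point problem and then invoke Theorem~\ref{La}. Following the strategy outlined in the introduction, I would first recall that in a unital $C^{\ast}$-algebra $\mathcal{A}$ every element is a linear combination of (at most three) unitaries; hence a derivation $\delta:\mathcal{A}\rightarrow M$ is inner as soon as we can produce $T\in M$ with $\delta(g)=T.g-g.T$ for every unitary $g$ in $\mathcal{A}$. So the goal is to find such a common $T$.

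Let $G$ denote the group of unitary elements of $\mathcal{A}$. For $g\in G$ and $x\in M$ define
\begin{equation*}
\alpha(g)x=g.x.g^{-1}+\delta(g).g^{-1}.
\end{equation*}
I would verify three things. First, each $\alpha(g)$ is an isometry of $M$: by the normalization \eqref{iso} the map $x\mapsto g.x.g^{-1}$ has norm $\le 1$ with inverse $x\mapsto g^{-1}.x.g$ also of norm $\le 1$, so it is a surjective linear isometry, and $\alpha(g)$ is this isometry followed by a translation. Second, $\alpha$ is an action, i.e. $\alpha(gh)=\alpha(g)\alpha(h)$; this is a direct computation using the module associativity and the derivation identity $\delta(gh)=\delta(g).h+g.\delta(h)$, which makes the affine parts match up (this is the standard cocycle identity for $g\mapsto\delta(g).g^{-1}$). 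Third — and this is the step I expect to be the main obstacle — the orbits of $\alpha$ must be bounded. For a fixed $x$ we have $\|\alpha(g)x\|\le\|g.x.g^{-1}\|+\|\delta(g).g^{-1}\|\le\|x\|+\|\delta(g)\|$, and since $\|g\|=1$ for unitaries and $\delta$ is bounded, $\|\delta(g)\|\le\|\delta\|$; hence $\sup_{g\in G}\|\alpha(g)x\|\le\|x\|+\|\delta\|<\infty$. So in fact boundedness of orbits is automatic here precisely because unitaries have norm one and $\delta$ is continuous — the potential difficulty evaporates, which is the whole point of passing to unitaries rather than working with all of $\mathcal{A}$.

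With these three facts in hand, $X=M$ is a complete metric space with uniform relative normal structure (a Banach space is complete, and the hypothesis gives the structure), and $\alpha(G)$ is a group of isometries of $M$ with bounded orbits. Theorem~\ref{La} then yields a point $T\in M$ fixed by every $\alpha(g)$, i.e. $g.T.g^{-1}+\delta(g).g^{-1}=T$ for all $g\in G$. Multiplying on the right by $g$ (using $e.x=x.e=x$ and associativity) gives $g.T+\delta(g)=T.g$, that is $\delta(g)=T.g-g.T$ for every unitary $g$.

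Finally I would extend from unitaries to all of $\mathcal{A}$. Both $\delta$ and the map $a\mapsto T.a-a.T$ are linear in $a$; they agree on the set of unitaries of $\mathcal{A}$; and since $\mathcal{A}$ is a unital $C^{\ast}$-algebra, every $a\in\mathcal{A}$ is a linear combination of unitaries. Therefore $\delta(a)=T.a-a.T$ for all $a\in\mathcal{A}$, so $\delta$ is inner, completing the proof. The only genuinely substantive input beyond bookkeeping is Theorem~\ref{La}; everything else is the verification that the affine action is well defined, isometric, and has bounded orbits, together with the classical fact about linear combinations of unitaries.
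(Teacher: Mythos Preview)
Your approach coincides with the paper's: define the affine isometric action $\alpha(g)x=g.x.g^{-1}+\delta(g).g^{-1}$ of the unitary group on $M$, verify bounded orbits, apply Theorem~\ref{La} to obtain a fixed point $T$, and then extend $\delta(g)=T.g-g.T$ from unitaries to all of $\mathcal{A}$ by linearity.

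There is, however, one genuine gap. In your bounded-orbits step you write ``$\delta$ is bounded'' and later ``$\delta$ is continuous'', but neither is part of the hypotheses: the paper's definition of a derivation is purely algebraic (linear plus the Leibniz rule), with no continuity assumption, and the theorem is stated for \emph{any} derivation. The paper fills exactly this gap by invoking Ringrose's automatic continuity theorem~\cite{Ri}, which asserts that every derivation from a C$^{\ast}$-algebra into a Banach bimodule is automatically bounded. Once you insert that citation before estimating $\|\delta(g)\|\le\|\delta\|$, your proof is complete and matches the paper's.
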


\begin{proof}
Let $U(\mathcal{A})$ denotes the group of unitaries of $\mathcal{A}$. For
any $g\in U(\mathcal{A})$ and $x\in M$, set%
\begin{equation*}
\alpha (g)x=g.x.g^{-1}+\delta (g).g^{-1},
\end{equation*}%
and notice that $\alpha :U(\mathcal{A})\times M\rightarrow M$ is an (affine)
and isometric action of $U(\mathcal{A})$ on $M$, and thus%
\begin{eqnarray*}
\alpha (gh)x &=&\alpha (g)\alpha (h)x, \\
\left\Vert \alpha (g)x-\alpha (g)y\right\Vert &=&\left\Vert x-y\right\Vert \
\ \ (g,h\in U(\mathcal{A}),x,y\in M).
\end{eqnarray*}%
Furthermore, Ringrose's theorem \cite{Ri} yields $\delta :\mathcal{A}%
\rightarrow M$ is bounded, and hence the orbit%
\begin{equation*}
A=\{\delta (g).g^{-1}:g\in U(\mathcal{A})\}
\end{equation*}%
of $\alpha $ at $0$ is bounded too. It follows from Theorem \ref{La} that
there is $T\in M$ such that $\alpha (g)T=T$, which gives 
\begin{equation*}
\delta (g)=T.g-g.T\ \ \ (g\in U(\mathcal{A})).
\end{equation*}%
This finishes the proof since any element of $\mathcal{A}$ is a linear
combination of four unitaries (in fact, three \cite{KaPe}).
\end{proof}

We now turn to the case of group algebras $L_{1}(G)$, where $G$ is a locally
compact group. This case is a little more subtle. If $E$ is an essential
Banach $L_{1}(G)$-bimodule, i.e., the linear hull of $\{a.t.b:a,b\in
L_{1}(G),t\in E\}$ is dense in $E$, then the dual actions%
\begin{equation*}
\left\langle t,x.a\right\rangle =\left\langle a.t,x\right\rangle ,\ \ \
\left\langle t,a.x\right\rangle =\left\langle t.a,x\right\rangle \ \ \ (a\in
L_{1}(G),t\in E,x\in M)
\end{equation*}%
of $L_{1}(G)$ on $M=E^{\ast }$ can be extended to actions of $M(G)$, the
space of (complex-valued) Radon measures on $G$. Let $\delta
:L_{1}(G)\rightarrow M$ be a continuous derivation. Then there is a unique
derivation $\tilde{\delta}:M(G)\rightarrow M$ extending $\delta $ such that $%
\parallel \tilde{\delta}\parallel =\parallel \delta \parallel $, and 
\begin{equation*}
\langle a.t,\tilde{\delta}\mu \rangle =\left\langle t,\delta (\mu \ast
a)\right\rangle -\left\langle t.\mu ,\delta a\right\rangle \ \ \ (a\in
L_{1}(G),\mu \in M(G),t\in E),
\end{equation*}%
where $\ast $ denotes the convolution (with respect to a fixed left Haar
measure on $G$) and $t.\mu $ $(t\in E,\mu \in M(G))$ extends the action of $%
L_{1}(G)$ on $E$ to $M(G)$ (see \cite[Theorem 5.6.34]{Da}, \cite{Jo}). For
any $g\in G$ and $x\in M$, define an action $\alpha :G\times M\rightarrow M$
by setting%
\begin{equation*}
\alpha (g)x=\delta _{g}.x.\delta _{g^{-1}}+\tilde{\delta}(\delta
_{g}).\delta _{g^{-1}},
\end{equation*}%
where $\delta _{g}$ denotes the Dirac measure at $g.$ Since the dual Banach $%
L_{1}(G)$-bimodule $M$ satisfies (\ref{iso}), $\alpha $ is an isometric
action of $G$ on $M$ with bounded orbits, and thus by Theorem \ref{La},
there is $T\in M$ such that $\alpha (g)T=T$ whenever $M$ has uniform
relative normal structure. It follows that 
\begin{equation*}
\tilde{\delta}(\delta _{g})=T.\delta _{g}-\delta _{g}.T,
\end{equation*}%
and then Theorem 5.6.39 in \cite{Da} shows that $\delta :L_{1}(G)\rightarrow
M$ is inner. We are thus led to the following theorem.

\begin{theorem}
\label{group}Let $G$ be a locally compact group, and let $E$ be an essential
Banach $L_{1}(G)$-bimodule. If the dual $L_{1}(G)$-bimodule $E^{\ast }$ has
uniform relative normal structure, then any continuous derivation $\delta
:L_{1}(G)\rightarrow E^{\ast }$ is inner.
\end{theorem}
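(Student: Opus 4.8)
The plan is to reduce the statement to Theorem \ref{La} applied to $M=E^{\ast}$, following the same template as in the proof of Theorem \ref{Cstar} but with the measure algebra $M(G)$ playing the role of $\mathcal{A}$. First I would pass from $L_{1}(G)$ to $M(G)$. Since $E$ is an essential Banach $L_{1}(G)$-bimodule, the dual module actions of $L_{1}(G)$ on $M=E^{\ast}$ extend to module actions of $M(G)$ on $M$ still satisfying (\ref{iso}), and, because $\delta$ is assumed continuous, it extends uniquely to a derivation $\tilde{\delta}:M(G)\rightarrow M$ with $\Vert\tilde{\delta}\Vert=\Vert\delta\Vert$, given by the formula displayed just before the statement; this is exactly \cite[Theorem 5.6.34]{Da} together with \cite{Jo}. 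Note that, unlike in Theorem \ref{Cstar}, no automatic-continuity input (Ringrose's theorem) is needed here, as continuity of $\delta$ is part of the hypothesis.

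Next I would introduce the affine isometric action. For $g\in G$ and $x\in M$ put $\alpha(g)x=\delta_{g}.x.\delta_{g^{-1}}+\tilde{\delta}(\delta_{g}).\delta_{g^{-1}}$. Each $\alpha(g)$ is an affine isometry of $M$: by (\ref{iso}) the map $\delta_{g}.(\cdot).\delta_{g^{-1}}$ is a linear isometry of $M$ onto $M$ with inverse $\delta_{g^{-1}}.(\cdot).\delta_{g}$ (since $\delta_{g}$, $\delta_{g^{-1}}$ have norm one in $M(G)$ and $\delta_{g}\ast\delta_{g^{-1}}=\delta_{e}$), and translation by the vector $\tilde{\delta}(\delta_{g}).\delta_{g^{-1}}$ does not affect distances. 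The action identity $\alpha(gh)=\alpha(g)\alpha(h)$ follows from $\delta_{gh}=\delta_{g}\ast\delta_{h}$ and the derivation rule $\tilde{\delta}(\delta_{g}\ast\delta_{h})=\tilde{\delta}(\delta_{g}).\delta_{h}+\delta_{g}.\tilde{\delta}(\delta_{h})$, exactly as in Theorem \ref{Cstar}. Finally the orbit of $0$ under $\alpha$ is $\{\tilde{\delta}(\delta_{g}).\delta_{g^{-1}}:g\in G\}$, which by (\ref{iso}) is bounded by $\Vert\tilde{\delta}\Vert=\Vert\delta\Vert<\infty$.

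With this in place I would invoke Theorem \ref{La}: $M=E^{\ast}$ is a Banach space, hence a complete metric space, it has uniform relative normal structure by hypothesis, and $G$ acts on it by isometries with bounded orbits via $\alpha$. Hence there is $T\in M$ with $\alpha(g)T=T$ for every $g\in G$, i.e. $\tilde{\delta}(\delta_{g})=T.\delta_{g}-\delta_{g}.T$ for all $g\in G$. The remaining step is to descend from the Dirac masses back to $L_{1}(G)$; since $\tilde{\delta}$ restricts to $\delta$ on $L_{1}(G)$, \cite[Theorem 5.6.39]{Da} yields that $\delta$ is inner, that is $\delta(a)=T.a-a.T$ for every $a\in L_{1}(G)$. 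The only genuinely delicate point I anticipate is the bookkeeping around the $M(G)$-extension --- verifying that $\tilde{\delta}$ is a derivation for the extended bimodule structure and that innerness on $\{\delta_{g}\}$ propagates to all of $L_{1}(G)$ --- but this is entirely contained in the cited results of Dales and Johnson, so no idea beyond Theorem \ref{La} is required.
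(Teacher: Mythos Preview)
Your proposal is correct and follows essentially the same route as the paper: extend the bimodule actions and the derivation from $L_{1}(G)$ to $M(G)$ via \cite[Theorem 5.6.34]{Da} and \cite{Jo}, build the affine isometric action $\alpha(g)x=\delta_{g}.x.\delta_{g^{-1}}+\tilde{\delta}(\delta_{g}).\delta_{g^{-1}}$ with bounded orbits, apply Theorem~\ref{La} to obtain $T$ with $\tilde{\delta}(\delta_{g})=T.\delta_{g}-\delta_{g}.T$, and then invoke \cite[Theorem 5.6.39]{Da} to descend to $L_{1}(G)$. Your additional explicit verifications (of the action identity and the isometry property) are sound and merely spell out what the paper leaves implicit.
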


\begin{corollary}
\label{groupcor}Let $G$ be a locally compact group, and let $L_{1}(\mu )$ be
an essential Banach $L_{1}(G)$-bimodule, where $\mu $ is a $\sigma $-finite
measure on $(E,\Sigma )$ or a Radon measure on a locally compact space. Then
any continuous derivation $\delta :L_{1}(G)\rightarrow L_{\infty }(\mu )$
(into the dual $L_{1}(G)$-bimodule $L_{\infty }(\mu )$) is inner.
\end{corollary}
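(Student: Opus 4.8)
The plan is to obtain this corollary as an immediate specialization of Theorem~\ref{group}: the only hypothesis of that theorem which is not already built into the present statement is that the relevant dual $L_1(G)$-bimodule has uniform relative normal structure, and under the assumptions on $\mu$ that dual is exactly $L_\infty(\mu)$. So the first step is to record the duality $L_1(\mu)^\ast = L_\infty(\mu)$, which holds precisely because $\mu$ is taken to be $\sigma$-finite on $(E,\Sigma)$ or Radon on a locally compact space (these are exactly the measure classes for which $L_1(\mu)$ is a predual of $L_\infty(\mu)$); hence Theorem~\ref{group} may be applied with the essential bimodule taken to be $L_1(\mu)$, whose dual is $L_\infty(\mu)$.

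Next I would verify the normal-structure condition for $M = L_\infty(\mu)$. In the real case, $L_\infty(\mu)$ is injective (hyperconvex) for these classes of measures, as recalled just after Corollary~\ref{Lang} (see \cite[Corollary 3.10]{DeFl}), and injective metric spaces have uniform relative normal structure with $c = 1/2$. In the complex case, $L_\infty(\mu)$ need not be hyperconvex, but by \cite[Lemma]{So} it still has uniform relative normal structure, now with $c = 1/\sqrt{2} < 1$. In either situation the hypothesis of Theorem~\ref{group} is met.

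With both ingredients in place the corollary follows at once: Theorem~\ref{group} asserts that every continuous derivation $\delta : L_1(G)\rightarrow L_\infty(\mu)$ is inner. Unwinding that theorem, one extends $\delta$ to $\tilde\delta : M(G)\rightarrow L_\infty(\mu)$, runs the isometric action $\alpha(g)x = \delta_g.x.\delta_{g^{-1}} + \tilde\delta(\delta_g).\delta_{g^{-1}}$ of $G$ on $L_\infty(\mu)$ — whose orbit at $0$ is bounded since $\tilde\delta$ is bounded — invokes Theorem~\ref{La} to obtain a fixed point $T$ so that $\tilde\delta(\delta_g) = T.\delta_g - \delta_g.T$, and finally applies \cite[Theorem 5.6.39]{Da} to conclude that $\delta$ itself is inner on $L_1(G)$.

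I do not anticipate a substantive obstacle; the work is entirely in the two bookkeeping points above — the duality $L_1(\mu)^\ast = L_\infty(\mu)$, which is the reason the admissible measure classes are restricted as they are, and the fact that Soardi's lemma yields a genuine constant $c<1$ (namely $1/\sqrt 2$) in the complex case, so that Theorem~\ref{group} applies there without modification.
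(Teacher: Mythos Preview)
Your proposal is correct and matches the paper's (implicit) approach: the corollary is stated in the paper without a separate proof, being an immediate specialization of Theorem~\ref{group} once one knows that $L_{1}(\mu)^{\ast}=L_{\infty}(\mu)$ for the given measure classes and that $L_{\infty}(\mu)$ has uniform relative normal structure (via hyperconvexity in the real case and Soardi's lemma with $c=1/\sqrt{2}$ in the complex case). Your final paragraph unwinding Theorem~\ref{group} is accurate but unnecessary for the corollary itself.
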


In particular, Corollary \ref{groupcor} applies to $L_{1}(G)$ itself with
the convolution product and in this case gives the theorem of Johnson \cite%
{Jo1}.

\section{A fixed point theorem in $B(H,\ell _{\infty })$}

In 1955, motivated by work of Dixmier and Day on uniformly bounded group
representations, Kadison \cite{Ka1} raised the problem of whether every
bounded homomorphism $u$ from C$^{\ast }$-algebra $\mathcal{A}$ into $B(H)$
is similar to a $\ast $-homomorphism, i.e., does there exist an invertible
operator $S\in B(H)$ such that the map $\tilde{u}(x)=S^{-1}u(x)S$ satisfies
the condition $\tilde{u}(x^{\ast })=\tilde{u}(x)^{\ast }$? The Kadison
similarity problem is equivalent to a number of other important problems
(see \cite{Cam, Ch2, HaPa, Pi, Pi1}), including the derivation problem \cite%
{Ki}--given a C$^{\ast }$-algebra $\mathcal{A}\subset B(H)$, is every
derivation $\delta :\mathcal{A}\rightarrow B(H)$ inner?

Our observation concerning derivations from a C$^{\ast }$-algebra $\mathcal{A%
}$ into $B(H)$ is based on the following consequence of Theorem \ref{La} in
Section 2.

\begin{theorem}
\label{main}Let $B(H,\ell _{\infty }(\Gamma ))$ be a Banach space of bounded
operators from a (complex) Hilbert space $H$ into a (complex) $\ell _{\infty
}(\Gamma )$ for some $\Gamma $. If $G$ is a group of isometries $g:B(H,\ell
_{\infty }(\Gamma ))\rightarrow B(H,\ell _{\infty }(\Gamma ))$ with bounded
orbits, then there exists $x\in B(H,\ell _{\infty }(\Gamma ))$ such that $%
gx=x$ for every $g\in G.$
\end{theorem}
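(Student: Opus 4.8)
The plan is to deduce Theorem \ref{main} from Theorem \ref{La} by showing that $B(H,\ell_\infty(\Gamma))$ is a complete metric space with uniform relative normal structure. Completeness is immediate since $B(H,\ell_\infty(\Gamma))$ is a Banach space. The heart of the matter is therefore to verify the uniform relative normal structure condition, and the key observation is that, for a fixed Hilbert space $H$, the space $B(H,\ell_\infty(\Gamma))$ is isometrically identifiable with $\ell_\infty(\Gamma, B(H,\mathbb{C})) = \ell_\infty(\Gamma, H^*)$, i.e., with the space of bounded families $(f_\gamma)_{\gamma\in\Gamma}$ with $f_\gamma \in H^*$, under the sup norm $\sup_\gamma \|f_\gamma\|$. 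Indeed, an operator $x\colon H\to\ell_\infty(\Gamma)$ is the same as the family of coordinate functionals $\gamma\mapsto (\xi\mapsto (x\xi)(\gamma))$, and $\|x\| = \sup_\gamma \|f_\gamma\|$ because $\|x\xi\|_\infty = \sup_\gamma |f_\gamma(\xi)|$. Since $H^* \cong \overline{H}$ is a Hilbert space, this exhibits $B(H,\ell_\infty(\Gamma))$ as an $\ell_\infty$-sum (sup-sum) of copies of a Hilbert space.

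The next step is to prove that any $\ell_\infty$-sum of Hilbert spaces has uniform relative normal structure with the constant $c = 1/\sqrt{2}$ — this is exactly the constant appearing in Soardi's lemma for $L_\infty(\mu)$ cited after Corollary \ref{Lang}, and the same proof applies. Given a nonempty bounded admissible set $M \subset \ell_\infty(\Gamma, H^*)$, one defines $z_M$ coordinatewise: on the $\gamma$-th coordinate let $z_M(\gamma)$ be the Chebyshev center of the projection $\{x(\gamma) : x\in M\}$ inside the Hilbert space $H^*$ (or, more carefully, the center associated to the closed-ball intersection structure, so that admissibility is respected coordinatewise). Using the parallelogram law in each Hilbert coordinate, one checks that $\|x(\gamma) - z_M(\gamma)\| \le (1/\sqrt 2)\,\mathrm{diam}\{x(\gamma):x\in M\} \le (1/\sqrt 2)\,\mathrm{diam}\, M$ for all $x\in M$ and all $\gamma$, giving the first bullet of the definition; and the second bullet (the "center stability" condition: if $M\subset B(y, c\,\mathrm{diam}\, M)$ then $d(y,z_M)\le c\,\mathrm{diam}\, M$) follows likewise by applying the corresponding coordinatewise Hilbert-space estimate and taking the supremum over $\gamma$. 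The admissibility bookkeeping — making sure $z_M$ can be chosen consistently so that the relevant balls intersect in each coordinate, and that $M$ being an intersection of balls in the sup norm projects to a reasonable set in each coordinate — is the routine but slightly delicate part.

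Once uniform relative normal structure is established, Theorem \ref{main} is immediate: $B(H,\ell_\infty(\Gamma))$ is a complete metric space with uniform relative normal structure, so Theorem \ref{La} applies verbatim to any group $G$ of surjective isometries with bounded orbits, producing a common fixed point $x \in B(H,\ell_\infty(\Gamma))$.

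The main obstacle I anticipate is the coordinatewise construction of $z_M$ in a way that genuinely respects the admissible-set structure: one must confirm that the Chebyshev-center argument that works cleanly for a single Hilbert space, and for $L_\infty(\mu)$ via Soardi's lemma, transfers to the $\ell_\infty$-sum $\ell_\infty(\Gamma, H^*)$ without obstruction — in particular that the second (stability) condition of the definition, which is what distinguishes uniform \emph{relative} normal structure from the weaker uniform normal structure, survives the passage to coordinates. Since $\ell_\infty(\Gamma, H^*)$ is covered by (a mild generalization of) the $L_\infty(\mu)$ case — indeed it is of the same type as the $L_\infty$ of a vector measure, or can be realized inside an injective space construction — I expect this to go through with essentially Soardi's computation, but it is the one place where genuine verification, rather than bookkeeping, is required.
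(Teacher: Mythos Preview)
Your proposal is correct and follows essentially the same route as the paper: identify $B(H,\ell_\infty(\Gamma))$ isometrically with $\ell_\infty(\Gamma,H^\ast)\simeq\ell_\infty(\Gamma,H)$, verify uniform relative normal structure by a coordinatewise Hilbert-space centering argument, and then invoke Theorem~\ref{La}. The paper carries this out with $c=\sqrt{3}/2$ and chooses each $z_i$ inside $\co M_i$, which makes the second (stability) condition you worry about immediate---since balls are convex, $M_i\subset B(y_i,c\diam M)$ forces $\co M_i\subset B(y_i,c\diam M)$ and hence $\Vert z_i-y_i\Vert\le c\diam M$.
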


\begin{proof}
Since $B(H,\ell _{\infty }(\Gamma ))$ is isometric to $\ell _{\infty
}(\Gamma ,H^{\ast })\simeq \ell _{\infty }(\Gamma ,H)$ by setting $B(H,\ell
_{\infty }(\Gamma ))\ni F\mapsto f\in \ell _{\infty }(\Gamma ,H^{\ast })$,
where 
\begin{equation*}
\left\langle h,f(\gamma )\right\rangle =F(h)(\gamma )\ \ \ (h\in H,\gamma
\in \Gamma ),
\end{equation*}%
it suffices to show, using Theorem \ref{La}, that $\ell _{\infty }(\Gamma ,H)
$ has uniform relative normal structure. It is quite simple (see also \cite[%
Proposition]{So}). For a bounded set $M\subset \ell _{\infty }(\Gamma ,H)$,
let 
\begin{equation*}
M_{i}=\{x_{i}\in H:(x_{\gamma })_{\gamma \in \Gamma }\in M\}\ \ \ (i\in
\Gamma ).
\end{equation*}%
Then, for each $i$, there exists $z_{i}\in \co M_{i}$ such that 
\begin{equation*}
\left\Vert x_{i}-z_{i}\right\Vert \leq \frac{\sqrt{3}}{2}%
%TCIMACRO{\TeXButton{diam}{\diam}}%
%BeginExpansion
\diam%
%EndExpansion
M_{i}\leq \frac{\sqrt{3}}{2}%
%TCIMACRO{\TeXButton{diam}{\diam}}%
%BeginExpansion
\diam%
%EndExpansion
M\ \ \ (x_{i}\in M_{i},i\in \Gamma ).
\end{equation*}%
Now suppose that $M\subset B((y_{\gamma }),\frac{\sqrt{3}}{2}%
%TCIMACRO{\TeXButton{diam}{\diam}}%
%BeginExpansion
\diam%
%EndExpansion
M)$ for some $(y_{\gamma })\in \ell _{\infty }(\Gamma ,H).$ Then $%
M_{i}\subset B(y_{i},\frac{\sqrt{3}}{2}%
%TCIMACRO{\TeXButton{diam}{\diam}}%
%BeginExpansion
\diam%
%EndExpansion
M)$ for each $i$, and hence also $\co M_{i}\subset B(y_{i},\frac{\sqrt{3}}{2}%
%TCIMACRO{\TeXButton{diam}{\diam}}%
%BeginExpansion
\diam%
%EndExpansion
M)$. It follows that 
\begin{equation*}
\left\Vert z_{i}-y_{i}\right\Vert \leq \frac{\sqrt{3}}{2}%
%TCIMACRO{\TeXButton{diam}{\diam}}%
%BeginExpansion
\diam%
%EndExpansion
M
\end{equation*}%
for every $i\in \Gamma $, which completes the proof.
\end{proof}

\begin{remark}
The method of proof carries over to $\ell _{\infty }(\Gamma ,X)$ when $X$ is
uniformly convex (see \cite{So}), and more generally when $X$ has uniform
normal structure (see \cite[Theorem VI.2.2]{ADL}).
\end{remark}

Given a Hilbert space $H$, we can embed it (linearly) isometrically into $%
\ell _{\infty }(\Gamma ),$ where $\Gamma =B_{H}$ (or any norming subset of $%
B_{H}$) by setting%
\begin{equation*}
H\ni h\mapsto (\left\langle h,\gamma \right\rangle )_{\gamma \in \Gamma }\in
\ell _{\infty }(\Gamma ).
\end{equation*}%
Let $\tilde{H}$ denote the isometric copy of $H$ in $\ell _{\infty }(\Gamma
) $, and define%
\begin{equation*}
\kappa (a)(\left\langle h,\gamma \right\rangle )_{\gamma \in \Gamma
}=(\left\langle ah,\gamma \right\rangle )_{\gamma \in \Gamma }\ \ \ (a\in
B(H),(\left\langle h,\gamma \right\rangle )_{\gamma \in \Gamma }\in \tilde{H}%
).
\end{equation*}%
Then $\kappa :B(H)\rightarrow B(\tilde{H})$ is a linear, isometric, unital $%
\ast $-isomorphism, i.e., $\kappa (ab)=\kappa (a)\circ \kappa (b)$ and $%
\kappa (a^{\ast })=\kappa (a)^{\ast }$ for every $a,b\in B(H)$. From now on
we will regard $H$ as a subspace of $\ell _{\infty }(\Gamma )$, and $B(H)$
as a subspace of $B(H,\ell _{\infty }(\Gamma ))$--the Banach space of
bounded linear operators from $H$ into $\ell _{\infty }(\Gamma )$.

Now let $\mathcal{A}\subset B(H)\subset B(H,\ell _{\infty }(\Gamma ))$ be a
unital C$^{\ast }$-algebra and extend each unitary $g\in U(\mathcal{A})$ to
a linear isometry $\tilde{g}:\ell _{\infty }(\Gamma )\rightarrow \ell
_{\infty }(\Gamma )$ by setting%
\begin{equation*}
\ell _{\infty }(\Gamma )\ni (z_{\gamma })_{\gamma \in \Gamma }\mapsto
(z_{g^{\ast }\gamma })_{\gamma \in \Gamma }\in \ell _{\infty }(\Gamma ).
\end{equation*}%
Then 
\begin{equation*}
(\tilde{g}\circ \tilde{h})(z_{\gamma })_{\gamma \in \Gamma }=\tilde{g}%
((z_{h^{\ast }\gamma })_{\gamma \in \Gamma })=(z_{h^{\ast }g^{\ast }\gamma
})_{\gamma \in \Gamma }=\widetilde{(g\circ h)}(z_{\gamma })_{\gamma \in
\Gamma }
\end{equation*}%
for every $g,h\in U(\mathcal{A}),(z_{\gamma })_{\gamma \in \Gamma }\in \ell
_{\infty }(\Gamma )$ and $\gamma \in \Gamma $. In other words, $%
(g,(z_{\gamma })_{\gamma \in \Gamma })$ $\mapsto \tilde{g}((z_{\gamma
})_{\gamma \in \Gamma })$ is a left action of $U(\mathcal{A})$ on $\ell
_{\infty }(\Gamma )$.

We are thus led to the following factorization theorem.

\begin{theorem}
\label{factor}Let $\mathcal{A}\subset B(H)\subset B(H,\ell _{\infty }(\Gamma
))$ be a unital C$^{\ast }$-algebra, and let $\delta :\mathcal{A}\rightarrow
B(H)$ be a derivation (with the usual product operation). Then there exists
a bounded linear operator $T:H\rightarrow \ell _{\infty }(\Gamma )$ such
that $\delta (g)=Tg-\tilde{g}T$ for every $g\in U(\mathcal{A})$.
\end{theorem}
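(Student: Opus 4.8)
The plan is to set up a group of affine isometries on $B(H,\ell_\infty(\Gamma))$ induced by the derivation $\delta$ and then apply Theorem \ref{main} to obtain the desired operator $T$. First I would recall that $\mathcal{A}$ sits inside $B(H) \subset B(H,\ell_\infty(\Gamma))$ via the embedding $H \hookrightarrow \ell_\infty(\Gamma)$ described above, and that each unitary $g \in U(\mathcal{A})$ has been extended to a linear surjective isometry $\tilde g : \ell_\infty(\Gamma) \to \ell_\infty(\Gamma)$ with $\widetilde{gh} = \tilde g \circ \tilde h$. By Ringrose's theorem the derivation $\delta : \mathcal{A} \to B(H)$ is automatically bounded, say $\|\delta\| \le C$.

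Next I would define, for $g \in U(\mathcal{A})$ and $x \in B(H,\ell_\infty(\Gamma))$,
\begin{equation*}
\alpha(g)x = \tilde g\, x\, g^{-1} + \delta(g)\, g^{-1}.
\end{equation*}
One checks this is a well-defined map into $B(H,\ell_\infty(\Gamma))$: $g^{-1} : H \to H$, then $x : H \to \ell_\infty(\Gamma)$, then $\tilde g : \ell_\infty(\Gamma) \to \ell_\infty(\Gamma)$, and $\delta(g)g^{-1} \in B(H) \subset B(H,\ell_\infty(\Gamma))$. The verification that $\alpha$ is an action is the cocycle computation
\begin{equation*}
\alpha(g)\alpha(h)x = \tilde g\bigl(\tilde h x h^{-1} + \delta(h)h^{-1}\bigr)g^{-1} + \delta(g)g^{-1} = \widetilde{gh}\, x\,(gh)^{-1} + \bigl(\tilde g\,\delta(h) + \delta(g)h\bigr)(gh)^{-1},
\end{equation*}
so it suffices that $\tilde g\,\delta(h) + \delta(g)h = \delta(gh)$ in $B(H,\ell_\infty(\Gamma))$. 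The derivation identity in $B(H)$ gives $\delta(gh) = \delta(g)h + g\,\delta(h)$, so I must check that $\tilde g$ restricted to the image of $\delta(h) \in B(H)$ — i.e. to $\tilde H \subset \ell_\infty(\Gamma)$ — agrees with $\kappa(g)$, equivalently that $\tilde g$ followed by the identification acts as left multiplication by $g$ on $H$; this is exactly the content of the formula $\tilde g((\langle k,\gamma\rangle)_\gamma) = (\langle gk, \gamma\rangle)_\gamma$, which follows from $\tilde g((z_\gamma)_\gamma) = (z_{g^*\gamma})_\gamma$ and $\langle k, g^*\gamma\rangle = \langle gk, \gamma\rangle$. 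Each $\alpha(g)$ is an isometry because $\|\tilde g x g^{-1}\| = \|x\|$ ($\tilde g$ isometric, $g^{-1}$ unitary) and the affine shift does not change distances; moreover $\alpha(g)$ is invertible with inverse $\alpha(g^{-1})$, so $\{\alpha(g) : g \in U(\mathcal{A})\}$ is a group of isometries of $B(H,\ell_\infty(\Gamma))$. Its orbit at $0$ is $\{\delta(g)g^{-1} : g \in U(\mathcal{A})\}$, which is bounded by $\|\delta\| \le C$ since $\|g^{-1}\| = 1$; hence every orbit is bounded.

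Now Theorem \ref{main} applies and yields a point $T \in B(H,\ell_\infty(\Gamma))$ with $\alpha(g)T = T$ for all $g \in U(\mathcal{A})$, i.e. $\tilde g\, T\, g^{-1} + \delta(g)g^{-1} = T$, which rearranges to $\delta(g) = Tg - \tilde g\, T$ for every $g \in U(\mathcal{A})$. I do not expect any serious obstacle here; the one point requiring care is the cocycle identity above, namely confirming that $\tilde g$ genuinely extends the left-multiplication action of $g$ on the copy $\tilde H$ so that $\tilde g\,\delta(h) = \kappa(g)\delta(h) = g\,\delta(h)$ (identifying $B(H)$ with $B(\tilde H)$ and then sitting inside $B(H,\ell_\infty(\Gamma))$) — this is precisely why the particular extension $\tilde g$ was chosen. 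Everything else (boundedness via Ringrose, isometry, invertibility, boundedness of orbits) is routine, and the proof concludes by simply invoking Theorem \ref{main}.
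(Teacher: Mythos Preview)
Your proposal is correct and follows essentially the same route as the paper: define the affine isometric action $\alpha(g)x=\tilde g\,x\,g^{-1}+\delta(g)g^{-1}$ on $B(H,\ell_\infty(\Gamma))$, use Ringrose's theorem for bounded orbits, and apply Theorem~\ref{main} to obtain the fixed point $T$. In fact you supply more detail than the paper does, particularly the verification of the cocycle identity via the fact that $\tilde g$ restricts to left multiplication by $g$ on $\tilde H$, which the paper leaves implicit.
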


\begin{proof}
The proof follows the arguments in Theorem \ref{Cstar}. For any unitary $%
g\in U(\mathcal{A})$ and $x\in X=B(H,\ell _{\infty }(\Gamma ))$, set%
\begin{equation*}
\alpha (g)x=\tilde{g}xg^{-1}+\delta (g)g^{-1},
\end{equation*}%
and note that $\alpha :U(\mathcal{A})\times X\rightarrow X$ is an isometric
action of $U(\mathcal{A})$ on $X$ with bounded orbits. Hence Theorem \ref%
{main} yields $\alpha (g)T=T$ for some $T\in X$. It follows that%
\begin{equation*}
\delta (g)=Tg-\tilde{g}T
\end{equation*}%
for each unitary element $g$, which completes the proof.
\end{proof}

\begin{remark}
\label{rem}Note that a similar result holds for derivations from $\ell
_{1}(G)$ to $B(H)$ on which $G$ acts by isometries, and it is well known
that there are non-inner derivations from $\ell _{1}(F_{2})$ to $B(\ell
_{2}(F_{2}))$ (see, e.g., \cite{Lo2}). However, in the case of C$^{\ast }$%
-algebras, the Christensen theorem (see \cite{Ch2}) asserts that a
derivation $\delta :\mathcal{A}\rightarrow B(H)$ is inner if and only if it
is completely bounded, and we can use Theorem \ref{factor} to study complete
boundedness of $\delta $. Let $T:H\rightarrow \ell _{\infty }(\Gamma )$ be a
bounded linear operator such that $\delta (g)=Tg-\tilde{g}T$ for every
unitary $g\in U(\mathcal{A})$. Following \cite{Pa}, define 
\begin{equation*}
S=\left( 
\begin{array}{cc}
I & T \\ 
0 & I%
\end{array}%
\right) :H\oplus _{2}H\rightarrow \ell _{\infty }(\Gamma )\oplus _{2}\ell
_{\infty }(\Gamma ),
\end{equation*}%
where $X=\ell _{\infty }(\Gamma )\oplus _{2}\ell _{\infty }(\Gamma )$
denotes a Banach space with the norm $\left\Vert (x_{1},x_{2})\right\Vert
=(\left\Vert x_{1}\right\Vert ^{2}+\left\Vert x_{2}\right\Vert ^{2})^{1/2}$.
Clearly, $H\oplus _{2}H$ is a Hilbert subspace of $X$, and note that $S$ is
an invertible operator onto $Y=S(H\oplus _{2}H)$ with the inverse $%
S^{-1}=\left( 
\begin{array}{cc}
I & -T \\ 
0 & I%
\end{array}%
\right) .$ Furthermore, if we define for $g\in U(\mathcal{A)}$,%
\begin{eqnarray*}
u(g) &=&\left( 
\begin{array}{cc}
g & -\delta (g) \\ 
0 & g%
\end{array}%
\right) :H\oplus _{2}H\rightarrow H\oplus _{2}H, \\
\tilde{G} &=&\left( 
\begin{array}{cc}
\tilde{g} & 0 \\ 
0 & \tilde{g}%
\end{array}%
\right) :X\rightarrow X,
\end{eqnarray*}%
we get $u(g)=S^{-1}\tilde{G}S$ and hence $\tilde{G}_{\mid
Y}=Su(g)S^{-1}:Y\rightarrow Y$ for each unitary $g$. Thus we obtain a
homomorphism%
\begin{equation*}
\mathcal{A}\ni a\rightarrow Su(a)S^{-1}\in B(Y)
\end{equation*}%
from $\mathcal{A}$ into $B(Y)$ that extends the group homomorphism 
\begin{equation*}
U(\mathcal{A)}\ni g\rightarrow \tilde{G}_{\mid Y}\in B(Y),
\end{equation*}%
and complete boundedness of this homomorphism is equivalent to innerity of
the derivation.
\end{remark}


\begin{thebibliography}{99}
\bibitem{ADL} J.M. Ayerbe Toledano, T. Dom\'{\i}nguez Benavides and G. L\'{o}%
pez Acedo, Measures of noncompactness in metric fixed point theory, Birkh%
\"{a}user Verlag, Basel, 1997.

\bibitem{BGM} U. Bader, T. Gelander, and N. Monod, A fixed point theorem for 
$L_{1}$ spaces, Invent. Math. 189 (2012), 143--148.

\bibitem{BrHa} M.R. Bridson and A. Haefliger, Metric spaces of non-positive
curvature, Springer-Verlag, Berlin, 1999.

\bibitem{BrMi} M.S. Brodski\u{\i} and D.P. Mil'man, On the center of a
convex set, Doklady Akad. Nauk SSSR 59 (1948), 837--840 (in Russian).

\bibitem{BrTi} F. Bruhat and J. Tits, Groupes r\`{e}ductifs sur un corps
local: I. Donn\`{e}es radicielles valu\`{e}es, Publications Math. IHES 41
(1972), 5--251.

\bibitem{Cam} J. Cameron at al., A remark on the similarity and perturbation
problems, C. R. Math. Acad. Sci. Soc. R. Can. 35 (2013), 70--76.

\bibitem{Ch2} E. Christensen, Extensions of derivations II, Math. Scand. 50
(1982), 111-122.

\bibitem{Da} H.G. Dales, Banach Algebras and Automatic Continuity, Oxford
University Press, New York, 2000.

\bibitem{DeFl} A. Defant and K. Floret, Tensor norms and operator ideals,
North-Holland Publishing Co., Amsterdam, 1993.

\bibitem{EJS} R. Esp\'{\i}nola-Garc\'{\i}a, M. Jap\'{o}n and D. Souza, New
examples of subsets of c with the FPP and stability of the FPP in
hyperconvex spaces, J. Fixed Point Theory Appl. 23 (2021), Paper No. 45, 19
pp.

\bibitem{EsKh} R. Esp\'{\i}nola, M. A. Khamsi, Introduction to hyperconvex
spaces, in: Handbook of metric fixed point theory, W. A. Kirk, B. Sims
(Eds.), Kluwer Academic Publishers, Dordrecht, 2001, pp. 391--435.

\bibitem{Ha} U. Haagerup, All nuclear C$^{\ast }$-algebras are amenable,
Invent. Math. 74 (1983), 305--319.

\bibitem{HaPa} D. Hadwin and V. Paulsen, Two reformulations of Kadison's
similarity problem, J. Operator Theory 55 (2006), 3--16.

\bibitem{Jo} B.E. Johnson, Cohomology in Banach algebras, Mem. Amer. Math.
Soc. 127 (1972), iii+96 pp.

\bibitem{Jo2} B.E. Johnson, Weak amenability of group algebras of connected
complex semisimple Lie groups, Proc. Amer. Math. Soc. 111 (1991), 177--185.

\bibitem{Jo1} B.E. Johnson, Weak amenability of group algebras, Bull. London
Math. Soc. 23 (1991), no. 3, 281--284.

\bibitem{Ka1} R.V. Kadison, On the othogonalization of operator
representations, Amer. J. Math. 77 (1955), 600--620.

\bibitem{KaPe} R.V. Kadison and G.K. Pedersen, Means and convex combinations
of unitary operators, Math. Scand. 57 (1985), 249--266.

\bibitem{KhKi} M.A. Khamsi and W.A. Kirk, An introduction to metric spaces
and fixed point theory, Wiley-Interscience, New York, 2001.

\bibitem{Ki} E. Kirchberg, The derivation problem and the similarity problem
are equivalent, J. Operator Theory 36 (1996), no. 1, 59--62.

\bibitem{Kir} W.A. Kirk, Nonexpansive mappings and the weak closure of
sequences of iterates, Duke Math. J. 36 (1969), 639--645.

\bibitem{La} U. Lang, Injective hulls of certain discrete metric spaces and
groups, J. Topol. Anal. 5 (2013), 297--331.

\bibitem{Lau} A.T.-M. Lau, Sets with fixed point property for isometric
mappings, Proc. Amer. Math. Soc. 79 (1980), 388--392.

\bibitem{Lo} V. Losert, The derivation problem for group algebras, Ann. of
Math. (2) 168 (2008), 221--246.

\bibitem{Lo2} V. Losert, On derivations and crossed homomorphisms, in:
Banach algebras 2009, R.J. Loy, V. Runde and A. So\l tysiak (Eds.), Banach
Center Publ. 91, Polish Acad. Sci. Inst. Math., Warsaw, 2010, pp. 199--217.

\bibitem{Pa} V. Paulsen, Completely bounded maps and dilations. Pitman
Research Notes in Math. 146, Longman, Wiley, New York, 1986.

\bibitem{Pi1} G. Pisier, The similarity degree of an operator algebra,
Algebra i Analiz 10 (1998), 132--186; reprinted in St. Petersburg Math. J.
10 (1999), 103--146.

\bibitem{Pi} G. Pisier, Similarity problems and completely bounded maps,
Springer-Verlag, Berlin, 2001.

\bibitem{Ri} J.R. Ringrose, Automatic continuity of derivations of operator
algebras, J. London Math. Soc. 5 (1972), 432--438.

\bibitem{So} P.M. Soardi, Existence of fixed points of nonexpansive mappings
in certain Banach lattices, Proc. Amer. Math. Soc. 73 (1979), 25--29.

\bibitem{WiWo} A. Wi\'{s}nicki and J. Wo\'{s}ko, Uniformly Lipschitzian
group actions on hyperconvex spaces, Proc. Amer. Math. Soc. 144 (2016),
3813--3824.
\end{thebibliography}
\end{document}